%
%
\documentclass{amsart}

\newtheorem{theorem}{Theorem}[section]
\newtheorem{lemma}[theorem]{Lemma}

\theoremstyle{definition}

\theoremstyle{remark}

\numberwithin{equation}{section}



\begin{document}

\title{The Sym(3) Conjecture and Alt(8)}

\author{Cecil Andrew Ellard}
\address{Bloomington, Indiana}
\curraddr{}
\email{cellard@ivytech.edu}






\begin{abstract}
We give an alternate computer-free proof of a result of Z. Arad, M. Muzychuk, and A. Oliver: if $G$ is a minimal counterexample to the $Sym(3)$ conjecture, then $Soc(G)^{\prime }$ cannot be isomorphic to $Alt(8)$.
\end{abstract}

\maketitle








\section{The $Sym(3)$ Conjecture}

The $Sym(3)$ conjecture states that if $G$ is a non-trivial finite group whose conjugacy classes have distinct cardinalities, then $G$ is isomorphic to $Sym(3)$, the symmetric group of order $6$. With additional assumptions on $G$, the conclusion is known to be true: for example when $G$ is supersolvable [Markel, 1973] or when $G$ is solvable [Zhang, 1994]. Let $\mathcal{H}$ be the hypothesis of the $Sym(3)$ conjecture:\\

\noindent ($\mathcal{H}$) The group $G$ is a finite, non-trivial group whose conjugacy classes have distinct cardinalities.\\
\\
The authors Z. Arad, M. Muzychuk, and A. Oliver [2004] proved that if $G$ satisfies $\mathcal{H}$, then either $Soc(G)^{\prime }=1$ or  $Soc(G)^{\prime }$ is isomorphic to one of the following:\\

\noindent 1. $ Alt(5)^{a}$, for $1 \leq a \leq 5, a \neq 2$\\
2. $Alt(8)$\\
3. $PSL(3,4)^{e}$, for $1 \leq e \leq 10$\\
4. $Alt(5) \times PSL(3,4)^{e}$, for $1 \leq e \leq 10$\\

\noindent The authors go on to show that if $G$ is a minimal counterexample to the $Sym(3)$ conjecture, then the list of possibilities for $Soc(G)^{\prime }$ can be shortened; either $Soc(G)^{\prime }=1$ or  $Soc(G)^{\prime }$ is isomorphic to one of the following: \\

\noindent 1. $Alt(5)^{a}$, for $3 \leq a \leq 5$\\
2. $PSL(3,4)^{e}$, for $1 \leq e \leq 10$\\

\noindent So in particular, in a minimal counterexample to the $Sym(3)$ conjecture, $Soc(G)^{\prime }$ cannot be isomorphic to $Alt(8)$. Their proof makes use of computer programs in the algebra system GAP. The purpose of this note is to give an alternate computer-free proof of their result for $Alt(8)$:\\

 \noindent \textbf{Theorem:} Let $G$ be a minimal counterexample to the $Sym(3)$ conjecture. Then $Soc(G)^{\prime}$ is not isomorphic to $Alt(8)$.\\

In what follows, we will let $Sym(n)$ be the symmetric group on $n$ letters, and $Alt(n)$ the alternating group on $n$ letters. For any group $G$, we will let $Soc(G)$ be the socle of $G$ (the subgroup generated by the minimal normal subgroups of $G$), $Z(G)$ the center of $G$,  $G^{\prime }$ the derived subgroup of $G$, $Aut(G)$ the automorphism group of $G$, $Inn(G)$  inner automorphism group of $G$, and $Out(G)$ the outer automorphism group of $G$. If $X$ is a subset of $G$, then $C_{G}(X)$ will be the centralizer of $X$ in $G$, and  $N_{G}(X)$ will be the normalizer of $X$ in $G$. If $g \in G$, we will let $g^{G}$ denote the set of all elements of $G$ that are conjugate in $G$ to $g$. In other words, $g^{G}$ is the conjugacy class in $G$ containing $g$.  Euler's totient function will be denoted by $\phi $.\\

\noindent A finite group is said to be a \textit{rational group} if every ordinary (i.e. complex) character is rational-valued. That is, for every ordinary character $\chi $ of $G$ and for every $g \in G$, we have $\chi (g) \in \mathbb{Q}$.  This is equivalent to only requiring that the irreducible ordinary characters of $G$ are rational-valued. We have the following characterization of rational groups (see [Isaacs], [Serre], or [Ellard, 2013]):

\begin{lemma}
Let $G$ be a finite group. Then the following are equivalent:\\

\noindent (i) $G$ is a rational group\\
(ii) For all $g \in G$, all generators of $\langle g \rangle$ are conjugate in $G$.\\
(iii) For all $g \in G$, $[N_{G}(\langle g \rangle ){:}C_{G}(\langle g \rangle )]=\phi (ord(g))$.\\
\end{lemma}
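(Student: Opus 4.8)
The plan is to prove the two biconditionals (i) $\Leftrightarrow$ (ii) and (ii) $\Leftrightarrow$ (iii), which together give the full equivalence. Throughout I would fix $g \in G$, write $m = ord(g)$, and let $\zeta$ be a primitive $m$-th root of unity. The first equivalence is Galois-theoretic, while the second is an elementary normalizer--centralizer count; isolating $g$ and quantifying over all $g \in G$ at the end recovers the global statement about $G$.

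For (i) $\Leftrightarrow$ (ii), the two ingredients I would assemble are, first, that the irreducible characters separate conjugacy classes, so $x$ and $y$ are conjugate in $G$ if and only if $\chi(x) = \chi(y)$ for every irreducible character $\chi$; and second, the Galois action on character values. Since any matrix representing $g$ has order dividing $m$, its eigenvalues $\lambda_1, \dots, \lambda_d$ are $m$-th roots of unity, so $\chi(g) = \sum_i \lambda_i \in \mathbb{Q}(\zeta)$. For $\sigma \in Gal(\mathbb{Q}(\zeta)/\mathbb{Q})$ there is a unique $k$ coprime to $m$ with $\sigma(\zeta) = \zeta^k$, and then $\sigma(\chi(g)) = \sum_i \lambda_i^k = \chi(g^k)$, since the eigenvalues of $\rho(g)^k = \rho(g^k)$ are the $\lambda_i^k$. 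Consequently $\chi(g)$ is rational for every irreducible $\chi$ exactly when it is fixed by every such $\sigma$, exactly when $\chi(g^k) = \chi(g)$ for all irreducible $\chi$ and all $k$ coprime to $m$, exactly when $g^k$ is conjugate to $g$ for all such $k$. As the generators of $\langle g \rangle$ are precisely the powers $g^k$ with $\gcd(k,m) = 1$, this is condition (ii).

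For (ii) $\Leftrightarrow$ (iii), I would study the conjugation action of $N_G(\langle g \rangle)$ on the cyclic group $\langle g \rangle$. This yields a homomorphism $N_G(\langle g \rangle) \to Aut(\langle g \rangle)$ whose kernel is exactly $C_G(\langle g \rangle)$, so $N_G(\langle g \rangle)/C_G(\langle g \rangle)$ embeds into $Aut(\langle g \rangle) \cong (\mathbb{Z}/m\mathbb{Z})^{\times}$, a group of order $\phi(m)$. Hence $[N_G(\langle g \rangle){:}C_G(\langle g \rangle)] \leq \phi(ord(g))$ always, with equality precisely when this embedding is surjective. The image consists of those automorphisms $g \mapsto g^k$ induced by conjugation inside $N_G(\langle g \rangle)$; and since any $y \in G$ with $ygy^{-1} = g^k$ for $\gcd(k,m)=1$ sends $\langle g \rangle$ to $\langle g^k \rangle = \langle g \rangle$ and so already lies in $N_G(\langle g \rangle)$, realizability inside the normalizer coincides with plain $G$-conjugacy of $g$ and $g^k$. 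Thus surjectivity, i.e. equality in (iii), holds if and only if every generator $g^k$ is conjugate to $g$ in $G$, which is (ii).

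The main obstacle I anticipate is the careful bookkeeping in the Galois step: justifying that the relevant character values already lie in the field $\mathbb{Q}(\zeta)$ attached to $ord(g)$, rather than only in the larger cyclotomic field attached to the exponent of $G$, and checking that as $\sigma$ ranges over $Gal(\mathbb{Q}(\zeta)/\mathbb{Q})$ the exponents $k$ range over all of $(\mathbb{Z}/m\mathbb{Z})^{\times}$, so that the coprimality condition lines up exactly with the generators of $\langle g \rangle$ and no residue class is missed. Once those identifications are pinned down, both equivalences reduce to the standard facts quoted above, and the argument is routine.
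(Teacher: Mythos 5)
Your proof is correct; note, however, that the paper never proves this lemma at all---it quotes the characterization as known, citing [Isaacs], [Serre], and [Ellard, 2013]---so there is no in-paper argument to compare against, and what you have written is essentially the standard proof found in those sources. Your split matches the natural structure of the statement: (i) $\Leftrightarrow$ (ii) via the action of $Gal(\mathbb{Q}(\zeta_{m})/\mathbb{Q})$ on character values together with the fact that irreducible characters separate conjugacy classes, and (ii) $\Leftrightarrow$ (iii) via the embedding of $N_{G}(\langle g \rangle)/C_{G}(\langle g \rangle)$ into $Aut(\langle g \rangle) \cong (\mathbb{Z}/m\mathbb{Z})^{\times}$, which has order $\phi(m)$. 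The two points you flag as needing care are indeed the only delicate ones, and both are standard: $\chi(g)$ lies in $\mathbb{Q}(\zeta_{m})$ because the eigenvalues of any representing matrix for $g$ are $m$-th roots of unity, and every residue $k$ coprime to $m$ is hit by some $\sigma$ because the map $Gal(\mathbb{Q}(\zeta_{m})/\mathbb{Q}) \to (\mathbb{Z}/m\mathbb{Z})^{\times}$ is surjective (irreducibility of the $m$-th cyclotomic polynomial over $\mathbb{Q}$). Your observation that any $y \in G$ with $ygy^{-1}=g^{k}$ automatically lies in $N_{G}(\langle g \rangle)$ is precisely what makes the index count in (iii) equivalent to plain $G$-conjugacy in (ii). Two small remarks: only the direction (i) $\Rightarrow$ (ii) actually needs character separation of classes (the converse uses only that characters are class functions), and your argument silently uses the paper's stated convention that rationality need only be checked on irreducible characters; both are fine as written. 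Since each equivalence is established pointwise in $g$, quantifying over all $g \in G$ recovers the lemma exactly as you say.
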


\noindent The finite symmetric groups $Sym(n)$ for $n \geq 1$ are rational groups [Isaacs]. Also, any group satisfying hypothesis $\mathcal{H}$ is a rational group:

\begin{lemma}
Let $G$ be a group which satisfies hypothesis $\mathcal{H}$. Then $G$ is a rational group.
\end{lemma}

\begin{proof}
Let $G$ be a group which satisfies hypothesis $\mathcal{H}$, and let $g \in G$. By the above lemma, it suffices to show that all generators of $\langle g \rangle $ are conjugate in $G$. Assume that $g_{1}$ and $g_{2}$ are elements of $G$ which generate $\langle g \rangle $. We wish to show that  $g_{1}$ and $g_{2}$ are conjugate. Note that $C_{G}(\langle g_{1} \rangle )=C_{G}(\langle g_{2} \rangle )$. Therefore\\

 $|g_{1}^{G}|=[G{:}C_{G}(g_{1})]=[G{:}C_{G}(\langle g_{1} \rangle  )]=[G{:}C_{G}(\langle g_{2} \rangle  )]=[G{:}C_{G}(g_{2})]=|g_{2}^{G}|$\\

\noindent and therefore the conjugacy classes $g_{1}^{G}$ and $g_{2}^{G}$ have the same cardinalities. So by hypothesis $\mathcal{H}$, $g_{1}^{G}$ and $g_{2}^{G}$ are the same conjugacy class, so  $g_{1}$ and $g_{2}$ are conjugate in $G$.
\end{proof}

We also will make use of the fact that if $H$ and $K$ are finite groups, then $H \times K$ is rational if and only if both $H$ and $K$ are rational. This is true because the irreducible ordinary characters of $H \times K$ are precisely the functions of the form $(h,k) \mapsto \chi_{1}(h) \cdot \chi_{2}(k)$ where $\chi_{1}$ and $\chi_{2}$ are irreducible ordinary characters of $H$ and $K$, respectively.   [Isaacs]\\

\noindent The term Frattini's Argument refers to the theorem stating that if G is a finite group and if $N$ is a normal subgroup of $G$, and if $P$ is a Sylow $p$-subgroup of $N$, then $G=N_{G}(P)N$. This theorem is often generalized as follows: if $G$ is a finite group acting transitively on a set $\Omega $, and if $N$ is a normal subgroup of $G$ which is also transitive on $\Omega $, then for any $\omega \in \Omega $,  $G=Stab_{G}(\omega )N$. One can go a little further with the following: \newpage

\begin{lemma} (Extended Frattini Argument) Let $G$ be a finite group acting (not necessarily transitively) on a set $\Omega $,  let $N$ be a normal subgroup of $G$ and let $\omega \in \Omega $. Then:\\

(i) $|\omega^{G}|/|\omega^{N}|=[G{:}Stab_{G}(\omega )N]$

(ii)  $|\omega^{G}|/|\omega^{N}|=1$ iff $G=Stab_{G}(\omega )N$

(iii)  $|\omega^{G}|/|\omega^{N}|=[G{:}N]$ iff $Stab_{G}(\omega) \leq N$

(iv) $|\omega^{G}|/|\omega^{N}|< [G{:}N]$ iff $Stab_{G}(\omega)$ is not a subgroup of $N$
\end{lemma}

\begin{proof}

(i) $|Stab_{G}(\omega )N|=|Stab_{G}(\omega )||N|/|Stab_{G}(\omega ) \cap N|=|Stab_{G}(\omega )||N|/|Stab_{N}(\omega )|$\\

So $|Stab_{G}(\omega )N|/|Stab_{G}(\omega )|=|N|/|Stab_{N}(\omega )|$\\

Therefore,  $[Stab_{G}(\omega )N{:}Stab_{G}(\omega )]=[N{:}Stab_{N}(\omega )]$\\

So $|\omega^{G}|=[G{:}Stab_{G}(\omega )] = [G{:}Stab_{G}(\omega )N][Stab_{G}(\omega )N{:}Stab_{G}(\omega )]=$\

$[G{:}Stab_{G}(\omega )N][N{:}Stab_{N}(\omega )]=[G{:}Stab_{G}(\omega )N]|\omega^{N}|$. \\

Therefore, $|\omega^{G}|/|\omega^{N}|=[G{:}Stab_{G}(\omega )N]$. This proves (i).\\

(ii) $|\omega^{G}|/|\omega^{N}|=1$ iff $[G{:}Stab_{G}(\omega )N]=1$ iff $G=Stab_{G}(\omega )N$. This proves (ii).\\

(iii) $|\omega^{G}|/|\omega^{N}|=[G{:}N]$ iff $[G{:}Stab_{G}(\omega )N]=[G{:}N]$ iff $Stab_{G}(\omega )N=N$ iff $Stab_{G}(\omega) \leq N$. This proves (iii).\\

(iv) $|\omega^{G}|/|\omega^{N}|< [G{:}N]$ iff $[G{:}Stab_{G}(\omega )N] <  [G{:}N]$ iff  $|N| < |Stab_{G}(\omega )N|$ iff $Stab_{G}(\omega)$ is not a subgroup of $N$. This proves (iv).
\end{proof}

A particular instance of this Extended Frattini Argument is the following:

\begin{lemma}
Let $G$ be a finite group, let $N$ be a normal subgroup of $G$, and let $n \in N$. Then:\\

(i) $|n^{G}|/|n^{N}|=[G{:}C_{G}(n)N]$

(ii)  $|n^{G}|/|n^{N}|=1$ iff $G=C_{G}(n)N$

(iii)  $|n^{G}|/|n^{N}|=[G{:}N]$ iff $C_{G}(n) \leq N$

(iv) $|n^{G}|/|n^{N}|< [G{:}N]$ iff $C_{G}(n)$ is not a subgroup of $N$

\end{lemma}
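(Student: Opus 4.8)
The plan is to apply the Extended Frattini Argument (the preceding lemma) to the action of $G$ on itself by conjugation. Concretely, I would take $\Omega = G$ and let $G$ act by $g \cdot x = gxg^{-1}$ for $g, x \in G$, and then set $\omega = n$. For this action the $G$-orbit of $n$ is exactly its conjugacy class $n^{G}$, and the orbit of $n$ under the restricted action of the normal subgroup $N$ is $\{xnx^{-1} : x \in N\} = n^{N}$. The hypotheses of the general lemma are met, since $N$ is a normal subgroup of $G$ and $n \in N \subseteq G = \Omega$, so it is simply a matter of matching the notation of the general result to the conjugation setting.

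The one identification to record is that the point stabilizer for conjugation is the centralizer: $Stab_{G}(n) = \{g \in G : gng^{-1} = n\} = C_{G}(n)$. With this in hand, each of the four conclusions follows by substituting $Stab_{G}(\omega) = C_{G}(n)$, $\omega^{G} = n^{G}$, and $\omega^{N} = n^{N}$ into the corresponding part of the Extended Frattini Argument. For instance, part (i) of that lemma reads $|\omega^{G}|/|\omega^{N}| = [G{:}Stab_{G}(\omega )N]$, which becomes $|n^{G}|/|n^{N}| = [G{:}C_{G}(n)N]$; parts (ii)--(iv) transcribe in the same way, with $Stab_{G}(\omega )$ replaced throughout by $C_{G}(n)$.

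There is no genuine obstacle here, as the surrounding text already flags this statement as a particular instance of the Extended Frattini Argument. The only point that warrants a line of verification is the routine fact that the stabilizer of a point under the conjugation action coincides with its centralizer; everything else is a direct substitution into the already-established general result.
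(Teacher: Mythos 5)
Your proof is correct and takes essentially the same approach as the paper, which also proves the lemma by specializing the Extended Frattini Argument to the conjugation action with $Stab_{G}(n)=C_{G}(n)$. The only (immaterial) difference is that the paper takes $\Omega = N$ rather than $\Omega = G$; either choice makes $n^{G}$ and $n^{N}$ the relevant orbits and the centralizer the stabilizer, so the substitution goes through identically.
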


\begin{proof}
Let $\omega = n$, let $\Omega = N$, and apply Lemma 1.3
\end{proof}

The group $Alt(8)$ is a simple subgroup of index $2$ in $Sym(8)$, and so we have $|Alt(8)|=\frac{1}{2} \cdot 8!=20{,}160$. $Alt(8)$ has two classes of involutions: the first with cycle structure $(ab)(cd)$ and cardinality $210$ and the second with cycle structure $(ab)(cd)(ef)(gh)$ and cardinality $105$. $Alt(8)$ also has two classes of elements of order $3$; cycle structures $(abc)$ and $(abc)(def)$ with cardinalities $112$ and $1{,}120$ respectively. $Alt(8)$ has two classes of elements of order $4$ with cycle structures $(abcd)(efgh)$ and $(abcd)(ef)$ and cardinalities $1{,}260$ and $2{,}520$ respectively. And finally, $Alt(8)$ has two classes of elements of order $6$ with cycle structures $(abcdef)(gh)$ and $(abc)(de)(fg)$ and cardinalities $3{,}360$ and $1{,}680$ respectively.\\

Note that $Alt(8)$ has only one class of cardinality $1{,}680$, since an element $a$ in such a class would satisfy $|C_{Alt(8)}(a)|=12$, and therefore the order of $a$ would be a divisor of $12$. The order of $a$ cannot be $1$, and it cannot be $12$ since $Alt(8)$ has no element of order $12$.  Therefore the order of $a$ is $2, 3, 4,$ or $6$. But among these orders, from the previous paragraph, only one has a class of cardinality $1{,}680$, the class of an element with cycle structure $(abc)(de)(fg)$ and order $6$. Similarly, $Alt(8)$ has only one class of cardinality $2{,}520$, since an element $a$ in such a class would satisfy  $|C_{Alt(8)}(a)|=8$, and therefore the order of $a$ would be a divisor of $8$. The order of $a$ cannot be $1$, and it cannot be $8$ since $Alt(8)$ has no element of order $8$.  So the order of $a$ is $2$ or $4$. But among these orders, only one has a class of cardinality $2{,}520$, the class of an element with structure $(abcd)(ef)$ and order $4$.\\

\begin{lemma} 
$Alt(8)$ is not a rational group.
\end{lemma}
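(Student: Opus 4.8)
The plan is to exhibit a single element of $Alt(8)$ that violates the rationality criterion of Lemma 1.1, which suffices since rationality requires the criterion to hold for \emph{every} element. The classes tabulated above (orders $2,3,4,6$) all have $\phi(\mathrm{order})\le 2$ and non-splitting cycle types, so they satisfy the criterion; the obstruction instead comes from the $7$-cycles, whose cycle type $7+1$ consists of \emph{distinct odd} parts. I would therefore fix a $7$-cycle $g$ and apply Lemma 1.1(iii), computing $[N_{Alt(8)}(\langle g\rangle){:}C_{Alt(8)}(\langle g\rangle)]$ and showing it is strictly smaller than $\phi(7)=6$.

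First I would pin down the centralizer. Since $g$ has a single $7$-cycle and one fixed point, $C_{Sym(8)}(g)=\langle g\rangle$ has order $7$; as a $7$-cycle is even, $C_{Alt(8)}(\langle g\rangle)=\langle g\rangle$ as well. Next, because $Sym(8)$ is rational, Lemma 1.1(iii) gives $[N_{Sym(8)}(\langle g\rangle){:}\langle g\rangle]=\phi(7)=6$, so $|N_{Sym(8)}(\langle g\rangle)|=42$. The sign homomorphism is trivial on $C=\langle g\rangle$, hence descends to the cyclic quotient $N_{Sym(8)}(\langle g\rangle)/\langle g\rangle\cong\mathbb{Z}/6$, and the entire question reduces to whether this descended map is trivial (in which case $N_{Sym(8)}(\langle g\rangle)\le Alt(8)$ and rationality would survive) or surjective.

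The crux, and the main thing to verify, is that $N_{Sym(8)}(\langle g\rangle)$ does contain an odd permutation. I would settle this by exhibiting the conjugator realizing a generator of $Aut(\langle g\rangle)$: writing $g=(1\,2\,3\,4\,5\,6\,7)$ and taking the permutation $\sigma$ with $\sigma g\sigma^{-1}=g^{3}$ (a primitive-root power), a direct check shows $\sigma$ is a single $6$-cycle fixing the two remaining points, hence odd. (Equivalently this is Zolotarev's lemma: the sign of $x\mapsto kx$ on $\mathbb{Z}/7$ is the Legendre symbol $\left(\frac{k}{7}\right)$, which is $-1$ at $k=3$.) Consequently the sign map on $N_{Sym(8)}(\langle g\rangle)/\langle g\rangle$ is onto, so $N_{Alt(8)}(\langle g\rangle)=N_{Sym(8)}(\langle g\rangle)\cap Alt(8)$ has order $21$, giving $[N_{Alt(8)}(\langle g\rangle){:}C_{Alt(8)}(\langle g\rangle)]=3\ne 6=\phi(7)$. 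By Lemma 1.1(iii), $Alt(8)$ is not rational. I expect the parity of $\sigma$ to be the only genuinely delicate point; everything else is forced by the rationality of $Sym(8)$ together with the order of the normalizer. As a cross-check I could instead apply Lemma 1.1(ii) directly: the same parity fact shows $g$ is $Alt(8)$-conjugate precisely to the quadratic-residue powers $g,g^{2},g^{4}$ but not to $g^{3}$, so two generators of $\langle g\rangle$ fail to be conjugate.
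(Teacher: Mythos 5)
Your argument is correct, and it attacks the same witness as the paper --- a $7$-cycle $g$ tested against Lemma 1.1(iii) --- but your method for computing $[N_{Alt(8)}(\langle g\rangle){:}C_{Alt(8)}(\langle g\rangle)]$ is genuinely different. The paper never touches the normalizer directly: since $\langle g\rangle$ is a Sylow $7$-subgroup, Sylow's theorem gives $[Alt(8){:}N_{Alt(8)}(\langle g\rangle)]\equiv 1 \pmod 7$, while $[Alt(8){:}C_{Alt(8)}(\langle g\rangle)]=\frac{1}{7}\cdot\frac{1}{2}\cdot 8!\equiv 3 \pmod 7$, so $[N{:}C]\equiv 3\pmod 7$ and in particular cannot equal $6=\phi(7)$; no explicit permutation, and no appeal to the rationality of $Sym(8)$, is needed. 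You instead use the rationality of $Sym(8)$ to pin down $|N_{Sym(8)}(\langle g\rangle)|=42$, and then carry out the step you rightly identify as the crux: the conjugator realizing $g\mapsto g^{3}$ is a $6$-cycle, hence odd (Zolotarev), so the normalizer meets $Alt(8)$ in a subgroup of order $21$ and $[N{:}C]=3$. Both proofs are sound and converge on the same value $3$ --- the paper extracts it too, in the remark following its proof, as the unique divisor of $6$ congruent to $3$ modulo $7$. The paper's congruence trick is shorter and computation-free; your route costs a parity verification but buys strictly more information: it determines the $Alt(8)$-fusion inside $\langle g\rangle$ exactly ($g$ is conjugate precisely to the quadratic-residue powers $g, g^{2}, g^{4}$), which is what makes your cross-check via Lemma 1.1(ii) immediate, and it explains structurally why rationality fails: the non-residue automorphisms of $\langle g\rangle$ are realized only by odd permutations. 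One small point: your opening claim that the order-$2,3,4,6$ classes satisfy the rationality criterion is left unverified, but it is also unnecessary, since a single violating element suffices; it should be cut or clearly flagged as an aside.
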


\begin{proof} 
Let $a=(1234567)$, an element of order $7$ in $Alt(8)$. By the previous characterization of rational groups, it suffices to show that\\

 $[N_{Alt(8)}(\langle a \rangle){:} C_{Alt(8)}(\langle a \rangle)] \neq \phi(ord(a))$.\\

\noindent We have $C_{Alt(8)}(\langle a \rangle ) = \langle a \rangle $, and so $|C_{Alt(8)}(\langle a \rangle )|=7$. Therefore,\\

$[Alt(8){:} C_{Alt(8)}(\langle a \rangle )]= \frac{1}{7} (\frac{1}{2} 8!) \equiv 3 \hspace{0.1cm} (mod \hspace{0.1cm}  7)$.\\

But $\langle a \rangle $ is a Sylow $7$-subgroup of $Alt(8)$ and so by a Sylow theorem we have\\

$[Alt(8){:} N_{Alt(8)}(\langle a \rangle )] \equiv 1 (mod \hspace{0.1cm}  7)$. Therefore,\\

$[N_{Alt(8)}(\langle a \rangle ){:} C_{Alt(8)}(\langle a \rangle)] \equiv 3 (mod \hspace{0.1cm}  7)$, and so\\

 $[N_{Alt(8)}(\langle a \rangle){:}C_{Alt(8)}(\langle a \rangle)] \neq 6 =  \phi(7)= \phi(ord(a))$.
\end{proof}

Note that since $[N_{Alt(8)}(\langle a \rangle ){:}C_{Alt(8)}(\langle a \rangle)]$ is a divisor of $\phi(ord(a))=6$ and congruent to $3 \hspace{0.1cm} (mod \hspace{0.1cm}  7)$, it must be $3$. Therefore $Alt(8)$ does not induce an automorphism of order $2$ by conjugation on $\langle a \rangle $. Therefore, $a$ is not conjugate to $a^{-1}$ in $Alt(8)$.\\

\noindent Since $Alt(8)$ is not a rational group, it follows that $Alt(8)$ cannot satisfy hypothesis $\mathcal{H}$. Also, $Sym(8)$ does not satisfy hypothesis $\mathcal{H}$; in fact the permutations $(123456)$ and $(123456)(78)$ have the same centralizer in $Sym(8)$ (and therefore their conjugacy classes have the same cardinality) but since they have different cycle structures, they are not conjugate in $Sym(8)$.  Finally, we note that $Aut(Alt(8)) \cong Sym(8)$, and therefore $|Out(Alt(8))| =2$. \\

\noindent We can now prove the main theorem:

\begin{theorem} Let $G$ be a minimal counterexample to the $Sym(3)$ conjecture. Then $Soc(G)^{\prime}$ is not isomorphic to $Alt(8)$. 
\end{theorem}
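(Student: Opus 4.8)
The plan is to assume $N := Soc(G)^{\prime} \cong Alt(8)$ and force a violation of $\mathcal{H}$. First I would fix the structure. Since $N$ is characteristic in the normal subgroup $Soc(G)$, it is normal in $G$; and since $Alt(8)$ is simple, writing $Soc(G)$ as the direct product of its nonabelian part and its abelian part $A$ shows that the nonabelian part equals its own derived subgroup $Soc(G)^{\prime} = N$, so $Soc(G) = N \times A$ with $A$ abelian and $N$ a minimal normal subgroup of $G$. By Lemma 1.2, $G$ is rational. Moreover $Z(G) = 1$: the class $\{1\}$ already has size $1$, so by $\mathcal{H}$ no nonidentity element can be central.

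Next I would set $C := C_G(N)$, a normal subgroup of $G$ with $N \cap C = Z(N) = 1$ and $[N,C] = 1$, so $NC = N \times C$ and conjugation induces an embedding of $G/C$ into $Aut(N) = Aut(Alt(8)) \cong Sym(8)$ whose image contains $Inn(N) \cong Alt(8)$. Since $|Out(Alt(8))| = 2$, either $G/C \cong Alt(8)$ or $G/C \cong Sym(8)$. To rule out the first case I would use the order-$7$ element $a = (1234567)$ of Lemma 1.6: the $G$-class of $a$, being contained in $N$, is exactly the orbit of $a$ under the automorphisms induced by $G/C$. If $G/C \cong Alt(8)$ these are the inner automorphisms, so the orbit is the $Alt(8)$-class of $a$, which by the remark after Lemma 1.6 omits $a^{-1}$; this contradicts rationality, since $a^{-1}$ generates $\langle a\rangle$ and so must be $G$-conjugate to $a$. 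Equivalently, Lemma 1.4(i) gives $|a^G|/|a^N| = [G{:}C_G(a)N]$, and rationality forces $|a^G|$ to be the full set of $7$-cycles, so this ratio equals $2$ and $NC \ne G$. Hence $G/C \cong Sym(8)$ and $NC$ has index $2$ in $G$.

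The crux is then to prove $C = 1$. Once this is known, $G \cong G/C \cong Sym(8)$; but $Sym(8)$ fails $\mathcal{H}$, since $(123456)$ and $(123456)(78)$ have equal centralizers and hence form two distinct classes of the same size, so $G$ would not satisfy $\mathcal{H}$ — the desired contradiction. The mechanism I would exploit is that elements centralized by all of $NC$ have tiny classes: if $v \in Z(C)$ and $v \ne 1$, then $NC \le C_G(v)$, so $|v^G| = [G{:}C_G(v)] \le [G{:}NC] = 2$; since $Z(G) = 1$ we cannot have $|v^G| = 1$, whence $|v^G| = 2$. Two distinct nonidentity elements of $Z(C)$ would then give two classes of size $2$, violating $\mathcal{H}$, while $|Z(C)| = 2$ would make the nonidentity element of $Z(C)$ central; therefore $Z(C) = 1$.

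The main obstacle is upgrading $Z(C) = 1$ to $C = 1$, that is, excluding a nontrivial (necessarily centerless) centralizer $C_G(N)$ together with the abelian socle $A$. I would attack this through the generalized Fitting subgroup: $N$ is the unique component of $G$, so $C$ has no components (its layer $E(C)$ is trivial) and $F(C) = F(G)$, whence $F^{*}(C) = F(C)$ is nilpotent and $C_C(F(C)) \le F(C)$; a nontrivial $C$ would then contain the nontrivial characteristic abelian subgroup $Z(F(G))$, normal in $G$ and centralizing $N$, from whose nontrivial $G$-orbits I would try to extract a repeated class size, using the centralizer bound together with rationality. The cleanest formulation, and the one I expect to carry the argument, is to show that the minimal counterexample $G$ is monolithic: if $N = Soc(G)$ is the unique minimal normal subgroup then $A = 1$, and any nontrivial normal subgroup of $G$ — in particular $C_G(N)$, were it nontrivial — would have to contain $N$, which is impossible as $N \cap C_G(N) = 1$; thus $C = 1$. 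I expect this step — ruling out any nontrivial normal subgroup centralizing $N$ — to be the delicate part, precisely because inside $N$ the $G$-classes merely realize the classes of even cycle type in $Sym(8)$, whose cardinalities are already pairwise distinct, so the contradiction can only emerge from the interaction of $N$ with $C_G(N)$.
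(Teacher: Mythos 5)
Your opening is sound and parallels the paper's first two claims: with $N=Soc(G)'$ and $C=C_G(N)$ you correctly get $N\cap C=1$, an embedding of $G/C$ into $Aut(Alt(8))\cong Sym(8)$ over $Inn(N)$, and the use of rationality of $G$ against the non-rationality of $Alt(8)$ (via the $7$-cycle) to force $G/C\cong Sym(8)$ and $[G{:}NC]=2$. But already your argument that $Z(C)=1$ has a hole: two distinct nonidentity elements of $Z(C)$ need not give two classes of size $2$ --- they can lie in a \emph{single} $G$-class of size $2$ (e.g.\ $Z(C)$ cyclic of order $3$ with its two generators fused in $G$), and hypothesis $\mathcal{H}$ permits one class of size $2$. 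So even this preliminary step does not close.

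The fatal gap is the crux you defer: passing from $Z(C)=1$ to $C=1$. This is not a technical detail to be filled in later; it is essentially the entire theorem, and the mechanism you propose cannot deliver it. The configuration your argument must exclude is $NC\cong Alt(8)\times Sym(3)$ with $[G{:}NC]=2$, and this configuration passes every soft test in your sketch: $C\cong Sym(3)$ is centerless and has no components, $F(C)=Z(F(C))\cong \mathbb{Z}/3\mathbb{Z}$ is a nontrivial abelian normal subgroup of $G$ centralizing $N$, and its two nonidentity elements form a single $G$-class of size $2$ --- no repeated class size, no conflict with rationality or with $\mathcal{H}$. Worse, the paper proves this configuration is \emph{forced}: $C=1$ is impossible (exactly by your own observation that it would give $G\cong Sym(8)$, which fails $\mathcal{H}$ --- the paper uses this to prove $C\neq 1$); then $C$ is shown to satisfy $\mathcal{H}$ itself (a delicate argument combining the fused $7$-cycles $a_1,a_1^{-1}$ with the direct-product structure of $NC$), and \emph{minimality of the counterexample} yields $C\cong Sym(3)$. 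Only then does a contradiction emerge, and it is not a soft one: one takes $s\in N$ of order $6$ with $|s^N|=1{,}680$ and $f\in N$ of order $4$ with $|C_N(f)|=8$, proves $s^G=s^N$ and $f^G=f^N$, and computes $|C_G(s\sigma)|=2\cdot 12\cdot 2=48=2\cdot 8\cdot 3=|C_G(f\delta)|$, where $\sigma,\delta\in C$ have orders $2,3$; since $s\sigma$ has order $6$ and $f\delta$ has order $12$, equal class sizes contradict $\mathcal{H}$. Note finally that your sketch never invokes minimality of $G$ at all, whereas it is indispensable above (it is what pins down $C\cong Sym(3)$); a minimality-free argument would prove a statement stronger than what Arad--Muzychuk--Oliver or this paper establish, which should itself have been a warning sign.
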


\begin{proof} Let G be a minimal counterexample to the $Sym(3)$ conjecture; so $G$ satisfies the hypothesis $\mathcal{H}$, but $G$ is not isomorphic to $Sym(3)$ and $G$ is a group of least cardinality satisfying hypothesis $\mathcal{H}$ but not isomorphic to $Sym(3)$.  We wish to prove that $Soc(G)^{\prime }$ is not isomorphic to $Alt(8)$. Assume to the contrary that $Soc(G)'$ is isomorphic to $Alt(8)$. We wish to derive a contradiction.\\

\noindent Let $A=Soc(G)^{\prime }$ and let $B=C_{G}(A)$. $A=Soc(G)^{\prime}$ char $Soc(G) \trianglelefteq G$, and so $A  \trianglelefteq G$. Also, $B = C_{G}(A) \trianglelefteq N_{G}(A) = G$, and so $B \trianglelefteq G$. Therefore $A \cap B \trianglelefteq A$, so since $A$ is simple,  $A \cap B = 1$ or  $A \cap B =A$. But $A \cap B =A$ would imply $A \leq B = C_{G}(A)$ which would imply $A$ was abelian. So $A \cap B = 1$. Therefore $B$ is a proper subgroup of $G$ and $AB \cong A \times B$.\\

Claim: $B$ is finite, and $B \neq 1$.\\

\noindent Since $G$ is finite and $B$ is a subgroup of $G$, $B$ is finite, too. The homomorphism $\theta : G \rightarrow Aut(A)$ induced by conjugation has kernel $B$ and maps $AB$ onto $Inn(A)$, and so $G/B \cong Im(\theta )$ is a subgroup of $Aut(A)$ which contains $Inn(A)$. Since $Aut(A) \cong Sym(8)$ and $Inn(A) \cong Alt(8)$, it follows that either $G/B \cong Alt(8)$ or $G/B \cong Sym(8)$. Since neither $Alt(8)$ nor $Sym(8)$ satisfy hypothesis $\mathcal{H}$, it follows that we cannot have $B=1$. \\

Claim: $[G:AB]=2$.\\

\noindent Since $\theta $ maps $AB$ onto $Inn(A)$, then by the correspondence theorem we have \linebreak
$[G{:}AB]=[Im(\theta){:}Inn(A)]$, which divides $[Aut(A){:}Inn(A)]=[Sym(8){:}Alt(8)]=2$, and so $[G{:}AB]$ divides $2$. If $[G{:}AB]$ were to equal 1, we would have $G \cong A \times B$; but since $G$ is a rational group, this would imply that $A \times  B$, and therefore both $A$ and $B$, were rational groups. But $A$ is not a rational group, and so we must have $[G{:}AB]=2$.\\

Claim: $C_{G}(B) \leq AB$.\\

\noindent Suppose not. We wish to derive a contradiction. Then since $[G{:}AB]=2$, we would have $G=C_{G}(B)AB$. But since $B=C_{G}(A)$, we have $A \leq C_{G}(B)$, and so $G=C_{G}(B)AB = C_{G}(B)B$.  \\

\noindent We will consider two cases:  $Z(B)=1$ and $Z(B) \neq 1$.\\

\noindent Case 1: Assume that $Z(B)=1$. We have just shown that  $G=C_{G}(B)B$. Also, $C_{G}(B) \cap B=Z(B)=1$. Note that $C_{G}(B) \trianglelefteq N_{G}(B)=G$ and so both $C_{G}(B)$ and $B$ are normal in $G$. Thus $G \cong C_{G}(B) \times B$. In the proof of our first claim above, we have shown that either $G/B \cong Alt(8)$ or $G/B \cong Sym(8)$. But since $|G/B|=|G|/|B|=2|A||B|/|B|=2|A|=2|Alt(8)|$, we must have $G/B \cong Sym(8)$. So $C_{G}(B) \cong Sym(8)$ and so $G \cong Sym(8) \times B$. $Sym(8)$ does not satisfy hypothesis $\mathcal{H}$; in fact the permutations $s_{1}=(123456)$ and $s_{2}=(123456)(78)$ have the same centralizer in $Sym(8)$ but since they have different cycle structures they are not conjugate in $Sym(8)$, and therefore, since $G \cong Sym(8) \times B$, they are not conjugate in $G$. Therefore,\\

$C_{G}(s_{1})=C_{Sym(8)}(s_{1}) \times B = C_{Sym(8)}(s_{2}) \times B = C_{G}(s_{1})$\\

\noindent so\\

$|s_{1}^{G}|=[G{:}C_{G}(s_{1})]=[G{:}C_{G}(s_{2})]= |s_{2}^{G}|$.\\

\noindent But since $G$  satisfies hypothesis $\mathcal{H}$, and since the conjugacy classes $s_{1}^{G}$ and $s_{2}^{G}$ have the same cardinality, they must be the same conjugacy class, and so  $s_{1}$ and  $s_{2}$ must be conjugate in $G$, a contradiction.\\

\noindent Case 2: Assume that $Z(B) \neq 1$. Let $z \in Z(B)$ with $z \neq 1$. So $C_{G}(B) \leq C_{G}(z)$ and $B \leq C_{G}(z)$, so $G = C_{G}(B)B \leq C_{G}(z)$, so $z \in Z(G)$. But this would give us two distinct conjugacy classes $z^{G}=\{z\}$ and $1^{G}=\{1\}$ with the same cardinality, 1. But $G$ satisfies hypothesis $\mathcal{H}$, so this is a contradiction.\\

\noindent This proves the claim that $C_{G}(B) \leq AB$.\\

Claim: If $b_{1}$, $b_{2} \in B$ and if $|b_{1}^{B}|=|b_{2}^{B}|$, and if neither $C_{G}(b_{1}) \leq AB$ nor $C_{G}(b_{2}) \leq AB$, then $b_{1}$ and $b_{2}$ are conjugate in $B$.\\

\noindent Assume that $b_{1}$, $b_{2} \in B$ and that $|b_{1}^{B}|=|b_{2}^{B}|$, and that neither $C_{G}(b_{1}) \leq AB$ nor $C_{G}(b_{2}) \leq AB$. We wish to show that $b_{1}$ and $b_{2}$ are conjugate in $B$. Since \break  $[G{:}AB] =2$, $AB \trianglelefteq G$ and we have $C_{G}(b_{1})(AB) = G = C_{G}(b_{2})(AB)$. So by the Extended Frattini Argument, (using $AB$ for $N$), we get\\

$|b_{1}^{G}|/|b_{1}^{AB}| = 1 = |b_{2}^{G}|/|b_{2}^{AB}|$\\

\noindent and therefore $b_{1}^{AB}=b_{1}^{G}$ and $b_{2}^{AB}=b_{2}^{G}$. But\\

$|b_{1}^{(AB)}|=[AB{:}C_{(AB)}(b_{1})]=[AB{:}AC_{B}(b_{1})]$ and\\

$|b_{2}^{(AB)}|=[AB{:}C_{(AB)}(b_{2})]=[AB{:}AC_{B}(b_{2})]$\\

\noindent and so since $|C_{B}(b_{1})|=|C_{B}(b_{2})|$, it follows that $|b_{1}^{(AB)}| = |b_{2}^{(AB)}|$. So $|b_{1}^{G}|=|b_{2}^{G}|$. Since $G$ satisfies hypothesis $\mathcal{H}$, we get
$b_{1}^{G}=b_{2}^{G}$.  So\\

$b_{1}^{B}=b_{1}^{(AB)} = b_{1}^{G}=b_{2}^{G}=b_{2}^{(AB)} =b_{2}^{B}$\\

\noindent and so $b_{1}^{B} =b_{2}^{B}$, so $b_{1}$ and $b_{2}$ are conjugate in $B$. This proves the claim.\\

Claim: $B$ satisfies hypothesis $\mathcal{H}$.\\

\noindent We have already shown that $B$ is finite and non-trivial. So we now only need to show that distinct conjugacy classes of $B$ have distinct cardinalities. Assume to the contrary that there are elements $b_{1}$ and $b_{2}$ in $B$ such that $|b_{1}^{B}|=|b_{2}^{B}|$ but $b_{1}$ and $b_{2}$ are not conjugate in $B$. We wish to derive a contradiction.\\

\noindent Note that $|b_{1}^{B}|=|b_{2}^{B}|$ implies that $[B{:}C_{B}(b_{1})]=[B{:}C_{B}(b_{2})]$ and therefore implies that $|C_{B}(b_{1})|=|C_{B}(b_{2})|$.\\

\noindent Since $b_{1}$ and $b_{2}$ are not conjugate in $B$, then by the previous claim, we must have either $C_{G}(b_{1}) \leq AB$ or $C_{G}(b_{2}) \leq AB$. Without loss of generality, assume that $C_{G}(b_{1}) \leq AB$. Let $a_{1}$ be a 7-cycle of $Alt(8)$ in $A$ and let $a_{2}=a_{1}^{-1}$. Then $a_{1}$ and $a_{2}$ are not conjugate in $Alt(8)$, but they generate the same cyclic subgroup of $G$ of order $7$; since $G$ is a rational group, $a_{1}$ and $a_{2}$ are therefore conjugate in $G$. Therefore, $a_{1}^{A}$ is a proper subset of $a_{1}^{G}$. Also, $|C_{A}(a_{1})|=|C_{A}(a_{2})|$.   Since $B=C_{G}(A)$, $a_{1}^{A}=a_{1}^{AB}$. So by the Extended Frattini Argument,  $1 < |a_{1}^{G}|/|a_{1}^{AB}| = [G{:}C_{G}(a_{1})(AB)] \leq 2$.  So $[G{:}C_{G}(a_{1})(AB)] = 2$, so $G_{G}(a_{1}) \leq AB$. Similarly, $G_{G}(a_{2}) \leq AB$. Therefore (using $A \trianglelefteq G$, $B \trianglelefteq G$, and $AB \cong A \times B$), we get:\\

\noindent $C_{G}(a_{1}b_{1})=C_{G}(a_{1}) \cap C_{G}(b_{1})= C_{AB}(a_{1}) \cap C_{AB}(b_{1}) = C_{A}(a_{1}) \times C_{B}(b_{1})$.\\

\noindent Similarly,\\

\noindent $C_{G}(a_{1}b_{2})=C_{G}(a_{1}) \cap C_{G}(b_{2})= C_{AB}(a_{1}) \cap C_{AB}(b_{2}) = C_{A}(a_{1}) \times C_{B}(b_{2})$.\\

\noindent Thus $|C_{G}(a_{1}b_{1})|=|C_{A}(a_{1})| \times |C_{B}(b_{1})| = |C_{A}(a_{1})| \times |C_{B}(b_{2})|= |C_{G}(a_{1}b_{2})|$.\\

\noindent So $|(a_{1}b_{1})^{G}|=|(a_{1}b_{2})^{G}|$. Since $G$ satisfies hypothesis $\mathcal{H}$, it follows that $a_{1}b_{1}$ is conjugate to $a_{1}b_{2}$ in $G$. Choose $g \in G$ such that $(a_{1}b_{1})^{g}=a_{1}b_{2}$. Then $a_{1}^{g}b_{1}^{g}=a_{1}b_{2}$. Therefore (again using $A \trianglelefteq G$, $B \trianglelefteq G$, and $AB \cong A \times B$) we get $a_{1}^{g}=a_{1}$ and $b_{1}^{g}=b_{2}$. This implies that $g \in C_{G}(a_{1})$ and so $g \in AB$. But no element $ab$ of $AB$ can conjugate $b_{1}$ to $b_{2}$, because since $A$ centralizes $B$, this would imply that $b$ conjugates $b_{1}$ to $b_{2}$, a contradiction to our assumption that $b_{1}$ and $b_{2}$ are not conjugate in $B$. This proves the claim that $B$ satisfies hypothesis $\mathcal{H}$.\\

Claim: $B \cong Sym(3)$.\\

\noindent $B$ is a finite non-trivial proper subgroup of $G$ which satisfies the hypothesis $\mathcal{H}$ of the $Sym(3)$ conjecture. Since we are assuming that $G$ is a counterexample of least cardinality to the $Sym(3)$ conjecture, this implies that $B \cong Sym(3)$. This proves the claim.\\

\noindent Therefore $G$ has the normal subgroup $AB \cong Alt(8) \times Sym(3)$ of index $2$. Since $B \cong Sym(3)$, let $\sigma $ be an involution of $B$ and let $\delta $ be an element of order $3$ of $B$.\\

\noindent $Alt(8)$ has a unique conjugacy class of cardinality $1{,}680$ which is the class of an element $s$ of $Alt(8)$ of order $6$. Also, $Alt(8)$ has a unique conjugacy class of cardinality $2{,}520$ which is the class of an element $f$ of $Alt(8)$ of order $4$.\\

\noindent Since $Sym(3) \cong B \trianglelefteq G$, $\sigma ^{G} \subseteq B$ and so $\sigma ^{G}=\sigma ^{B}$ and so by the Extended Frattini Argument, we have $G=C_{G}(\sigma )B$. So $C_{G}(\sigma )$ cannot be contained in $AB$. Similarly, we know that $C_{G}(\delta )$ cannot be contained in $AB$. Also, we have $s^{A} \subseteq s^{G}$ and $s^{G}$ is a union of $A$-conjugacy classes of elements of $A$ of order $6$ and thus has cardinality $1{,}680$ or $5{,}040$. Also, $B \leq C_{G}(s)$ and so $AB \leq C_{G}(s)A$. Then\\

$|s^{G}|/|s^{A}| = [G{:}C_{G}(s)A] \leq [G{:}AB]=2$, so\\

\noindent $|s^{G}| \leq 2|s^{A}|=3{,}360$. So $|s^{G}|=1{,}680=|s^{A}|$. So $s^{G}=s^{A}$.  Similarly, we have $f^{A} \subseteq f^{G}$ and $f^{G}$ is a union of $A$-conjugacy classes of elements of $A$ of order $4$ and thus has cardinality $1{,}260$ or $3{,}780$. Also, $B \leq C_{G}(f)$ and so $AB \leq C_{G}(f)A$. Then\\

$|f^{G}|/|f^{A}| = [G{:}C_{G}(f)A] \leq [G{:}AB]=2$,  so\\

\noindent $|f^{G}| \leq 2|f^{A}|=2{,}520$. So $|f^{G}|=1{,}260=|f^{A}|$. So $f^{G}=f^{A}$.  Since $s^{G}=s^{A}$ and  $f^{G}=f^{A}$, we know from the Extended Frattini Argument that neither $C_{G}(s)$ nor $C_{G}(f)$ can be contained in $AB$.\\

Claim: $C_{G}(s \sigma)$ is not contained in $AB$.\\

\noindent Since $C_{G}(s)$ is not contained in $AB$, we can choose $g \in C_{G}(s)$ such that $g \notin A \times B$. Since  $Sym(3) \cong B \trianglelefteq G$, $g$ permutes the three involutions of $B$. But $g$ cannot fix all three for otherwise we would have $g \in C_{G}(B)$ and by the previous claim, $C_{G}(B) \leq AB$. So either $g$ fixes one involution and transposes the other two, or $g$ permutes the three involutions in a $3$-cycle. But $g$ cannot permute the involutions in a $3$-cycle, because in this case, $g^{3}$ (which is also not in $AB$, since $[G{:}AB]=2$) would be in $C_{G}(B)$, which by a previous claim is a subgroup of $AB$.  So $g$ must fix one involution and transpose the other two. Without loss of generality, we assume that $\sigma $ is the involution of $B$ fixed by $g$. Then $g \in C_{G}(s) \cap G_{G}(\sigma )$. Thus, $g \in C_{G}(s \sigma )$. So $g$ is an element of $C_{G}(s \sigma)$ which is not an element of $AB$. This proves the claim that $C_{G}(s \sigma)$ is not contained in $AB$.\\

Claim: $C_{G}(f \delta )$ is not contained in $AB$.\\

\noindent Since $C_{G}(f)$ is not contained in $AB$, we can choose $h \in C_{G}(f)$ such that $h \notin AB$. Since  $Sym(3) \cong B \trianglelefteq G$, $h$ permutes the two elements of order $3$ of $B$. If $h$ fixes both of them, then $h \in C_{G}( \delta )$, and so we can take $h$ as our element in $C_{G}(f \delta )$ which is not in $AB$. Otherwise, assume that $h$ transposes the two elements of order $3$ of $B$. Since $C_{G}(\delta )$ cannot be contained in $AB$, there is an element $h^{\prime } \in G$ which is not in $AB$ which centralizes $\delta $. Every element in $G$ which is not $AB$ can be written as $hab$, so write $h^{\prime }=hab$. Then $(\delta )^{hab}= \delta $ so since $ (\delta )^h \in B$, and $A$ centralizes $B$, we get $(\delta )^{ha}= (\delta )^{h}$ so $(\delta )^{hab} = (\delta )^{hb}$. So $hb$ centralizes $\delta $. But $hb$ also centralizes $f$ (since both $h$ and $b$ do), and so $hb$ is in in $C_{G}(f \delta )$. And $hb$ is not in $AB$ (since $b$ is, but $h$ isn't). So $hb$ is an element of $C_{G}(f \delta )$ which is not an element of $AB$. This proves the claim that $C_{G}(f \delta )$ is not contained in $AB$.\\

\noindent Thus, $|C_{G}(s \sigma )| = 2|C_{A}(s) \times C_{B}(\sigma )| = 2|C_{A}(s)| \times |C_{B}(\sigma )|=2(12)(2)=48$.\\

\noindent Also, $|C_{G}(f \delta )| = 2|C_{A}(f) \times C_{B}(\delta )| = 2|C_{A}(f)| \times |C_{B}(\delta )| = 2(8)(3) = 48$.\\

\noindent Thus,  $|C_{G}(s \sigma )| = 48 = |C_{G}(f \delta )|$. Therefore the conjugacy classes in $G$ of $s \sigma $ and $f \delta $ have the same cardinality. Since $G$ satisfies hypothesis $\mathcal{H}$, it follows that $s \sigma $ and $f \delta $ are conjugate in $G$. But $s \sigma $ has order $6$ while $f \delta $ has order $12$, and so they cannot be conjugate in $G$. So we have reached the desired contradiction. It follows that $Soc(G)^{\prime}$ is not isomorphic to $Alt(8)$. 
\end{proof}

\bibliographystyle{amsplain}

\end{document}